\documentclass[11pt]{amsart}

\usepackage{enumerate,url,amssymb,  mathrsfs, graphicx, pdfsync, amsthm}
\newtheorem{theorem}{Theorem}[section]

\newtheorem*{lemma*}{Lemma}

\newtheorem{proposition}[theorem]{Proposition}

\theoremstyle{definition}
\newtheorem{definition}[theorem]{Definition}

\newtheorem{question}[theorem]{Question}

\theoremstyle{remark}

\numberwithin{equation}{section}

\newcommand{\yy}{\mathbb{Y}}

\newcommand{\dd}{\mathbb{D}}


\newcommand{\abs}[1]{\lvert#1\rvert}

\newcommand{\C}{\mathbb{C}}

\newcommand{\W}{\mathscr{W}}

\newcommand{\E}{\mathcal{E}}

\newcommand{\R}{\mathbb{R}}

\newcommand{\Y}{\mathbb{Y}}

\newcommand{\dtext}{\textnormal d}

\newcommand{\onto}{\xrightarrow[]{{}_{\!\!\textnormal{onto\,\,}\!\!}}}


\DeclareMathOperator{\diam}{diam}

\DeclareMathOperator{\dist}{dist}

\DeclareMathOperator{\loc}{loc}

\def\leq{\leqslant}
\def\geq{\geqslant}

\def\le{\leqslant}
\def\ge{\geqslant}

\def\XXint#1#2#3{{\setbox0=\hbox{$#1{#2#3}{\int}$}\vcenter{\hbox{$#2#3$}}\kern-.5\wd0}}

\def\XXiint#1#2#3{{\setbox0=\hbox{$#1{#2#3}{\iint}$}\vcenter{\hbox{$#2#3$}}\kern-.5\wd0}}

\begin{document}
\title[Sobolev homeomorphic extensions onto John domains]{Sobolev homeomorphic extensions\\ onto John domains}


\author[P. Koskela]{Pekka Koskela}
\address{Department of Mathematics and Statistics, P.O.Box 35 (MaD) FI-40014 University of Jyv\"askyl\"a, Finland}
\email{pekka.j.koskela@jyu.fi}

\author[A. Koski]{Aleksis Koski}
\address{Department of Mathematics and Statistics, P.O.Box 35 (MaD) FI-40014 University of Jyv\"askyl\"a, Finland}
\email{aleksis.t.koski@jyu.fi}

\author[J. Onninen]{Jani Onninen}
\address{Department of Mathematics, Syracuse University, Syracuse,
NY 13244, USA and  Department of Mathematics and Statistics, P.O.Box 35 (MaD) FI-40014 University of Jyv\"askyl\"a, Finland
}
\email{jkonnine@syr.edu}
\thanks{P. Koskela was supported by the Academy of Finland Grant number 323960.  A. Koski was supported by the Academy of Finland Grant number 307023.
J. Onninen was supported by the NSF grant  DMS-1700274.}

\subjclass[2010]{Primary 46E35, 58E20}


\keywords{Sobolev homeomorphisms, Sobolev extensions, John domains, Quasidisks}

\begin{abstract} 
Given the planar unit disk as the source and a Jordan domain as the target, 
we study the problem of extending a given boundary homeomorphism as a Sobolev homeomorphism. For general targets, this Sobolev variant of the classical Jordan-Sch\"oenflies theorem may admit no solution - it is possible to have a boundary homeomorphism which admits a continuous $\W^{1,2}$-extension but not even a homeomorphic $\W^{1,1}$-extension. We prove that if the target is assumed to be a John disk, then any boundary homeomorphism from the unit circle admits a Sobolev homeomorphic extension for all exponents $p<2$.  John disks, being one sided quasidisks, are of fundamental importance in Geometric Function Theory.
\end{abstract}

\maketitle
\section{Introduction}

Throughout this text $\Y$ is a bounded Jordan domain and $\mathbb D$ is  the unit disk  in the complex plane $\mathbb C$. The classical Jordan-Sch\"oenflies theorem states that every homeomorphism $\varphi \colon  \partial \mathbb D \onto \partial \Y$ admits a continuous extension $h \colon \overline{\mathbb D }\to  \overline{\Y}$ which takes $\dd$ homeomorphically onto $\Y$. We are seeking for its Sobolev variant. 

\begin{question}\label{existQuestion}  Under which condition on $\Y$ does an arbitrary boundary homeomorphism $\varphi \colon \partial \dd \onto \partial \Y$ admit a homeomorphic extension $h \colon \overline {\dd} \onto \overline{\Y}$
of Sobolev class $\W^{1,p} (\dd, \C)$?
\end{question}
The most immediate  reason for studying such   a variant comes from the variational approach to Geometric Function Theory~\cite{AIMb, HKb, IMb, Reb} and  Nonlinear Elasticity~\cite{Anb, Bac, Cib}. Both theories share the compilation ideas to determine the infimum of  a given stored energy functional 
among  Sobolev homeomorphisms. When one studies such variational problems in the pure displacement setting, the first step is to ensure the existence of 
admissible homeomorphisms; that is, to answer Question~\ref{existQuestion}.  To begin, the boundary homeomorphism $\varphi$ must be the Sobolev trace of some (possibly non-homeomorphic) mapping in $\W^{1,p}(\dd, \C)$. Hence the best Sobolev regularity one can hope for is $p<2$ in Question~\ref{existQuestion}, see \cite{Ve}. On the other hand,
a Sobolev homeomorphic extension does not always exist for an arbitrary target domain even for a fairly regular boundary mapping. Indeed, there exists a Jordan domain $\Y$ and a homeomorphism $\varphi \colon \partial \mathbb D \onto \partial \Y $ which admits  a continuous $\W^{1,2}$-Sobolev extension to $\mathbb D$ but does not admit any homeomorphic extension to $\mathbb D$ in $\W^{1,1} (\mathbb D, \mathbb C)$, see~\cite{KOext, Zh}. Secondly, the requested $\W^{1,p}$-Sobolev homeomorphism in Question~\ref{existQuestion} exists for all  $p<2$ if the boundary of $\Y$ is rectifiable, see~\cite{KOext}. However, many important classes of domains studied in Geometric Function Theory include domains with nonrectifiable boundaries.  
Quasidisks serve as a standard example of such domains. A planar domain is a {\it quasidisk} if it is the image of an open disk  under a quasiconformal self mapping of $\mathbb C$, see Definition~\ref{def:quasi}. They have been studied intensively for many years because of their exceptional function theoretic properties, relationships with Teichm\"uller theory and Kleinian groups and interesting applications in complex dynamics, see~\cite{GeMo} for an elegant  survey. In particular, the Koch snowflake reveals the possible complexity of a quasidisk.
\begin{figure}[h]
\includegraphics[scale=0.2]{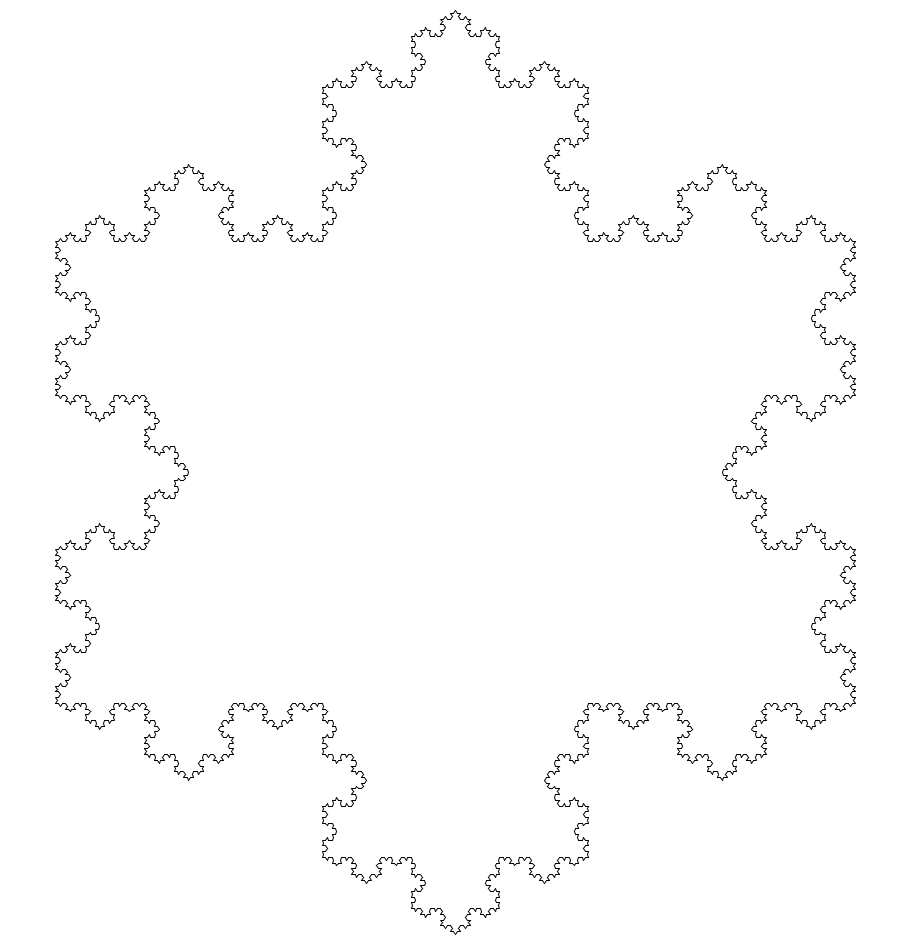}
\caption{The Koch snowflake reveals the complexity of a quasidisk.}
\label{cuspFig}
\end{figure}

\begin{theorem}\label{thm:quasidisk}
Let $\Y$ be a quasidisk and $\varphi \colon \partial \mathbb D \onto \partial \Y$ a homeomorphism. Then there exists a homeomorphic extension $h \colon \overline{\mathbb D}  \onto \overline{\mathbb Y } $ of $\varphi$ in $\W^{1,p} (\mathbb D, \C)$ for all $1\le p <2$.
\end{theorem}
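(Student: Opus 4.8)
My plan is to prove the extension under the a priori weaker hypothesis that $\Y$ is a \emph{John disk} and then to observe that every quasidisk is a John disk, so that Theorem~\ref{thm:quasidisk} drops out as a special case; the two-sided quasiconformal symmetry is not what the construction uses, only the one-sided interior (carrot) access that the John condition provides. To set up, I would factor through the conformal map. Let $\Phi\colon\mathbb D\onto\Y$ be the Riemann map; since $\int_{\mathbb D}|\Phi'|^2=\text{Area}(\Y)<\infty$ we have $\Phi\in\W^{1,2}(\mathbb D,\C)$, and for a quasidisk $\Phi$ extends quasiconformally to $\C$. Writing $\psi=\Phi^{-1}\circ\varphi\colon\partial\mathbb D\onto\partial\mathbb D$ for the induced circle homeomorphism, any homeomorphic extension $G\colon\overline{\mathbb D}\onto\overline{\mathbb D}$ of $\psi$ yields a homeomorphic extension $h=\Phi\circ G$ of $\varphi$, and since $\Phi$ is conformal one has $|Dh|\sim|\Phi'(G)|\,|DG|$. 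Thus the whole problem reduces to producing $G$ with $\int_{\mathbb D}|\Phi'(G)|^p|DG|^p<\infty$ for each $p<2$.

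The naive attempt is instructive: if $\psi$ admitted a bi-Lipschitz extension $G$, then $|DG|\sim1$ and a change of variables with bounded Jacobian would give $\int_{\mathbb D}|\Phi'(G)|^p\lesssim\int_{\mathbb D}|\Phi'|^p\le|\mathbb D|^{1-p/2}\big(\int_{\mathbb D}|\Phi'|^2\big)^{p/2}<\infty$ by Hölder, using $p<2$ decisively. This fails because an arbitrary circle homeomorphism is not quasisymmetric and need not admit any bi-Lipschitz (or quasiconformal) extension. The remedy is to build $G$ --- equivalently $h$ --- by hand on a two-sided Whitney scaffolding. On the target I would take a Whitney decomposition $\{S_i\}$ of $\Y$ with side $s_i\sim\dist(S_i,\partial\Y)$, organized by the John condition into a tree of carrot chains from a fixed center $x_0\in\Y$; these tile $\Y$, so $\sum_i s_i^2\sim\text{Area}(\Y)<\infty$. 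On the source I would use a compatible decomposition $\{Q_i\}$ of $\mathbb D$ adapted to $\varphi^{-1}$ and to the tree, and define $h$ piecewise by controlled (piecewise affine) homeomorphisms $Q_i\onto S_i$ that agree on shared edges and respect the center correspondence, so the glued map is a homeomorphism $\overline{\mathbb D}\onto\overline{\Y}$.

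The Sobolev bound then collapses to a single sum. On a piece $Q_i$ of diameter $\ell_i$ mapped onto a target region of diameter $d_i\sim s_i$ one has $\int_{Q_i}|Dh|^p\sim d_i^{\,p}\,\ell_i^{\,2-p}$, so the global energy is controlled by $\sum_i d_i^{\,p}\ell_i^{\,2-p}$. Hölder with the conjugate exponents $2/p$ and $2/(2-p)$ --- finite precisely because $p<2$ --- gives $\sum_i d_i^{\,p}\ell_i^{\,2-p}\le\big(\sum_i d_i^2\big)^{p/2}\big(\sum_i\ell_i^2\big)^{(2-p)/2}$. The first factor is finite by the John geometry, since $\sum_i d_i^2\sim\sum_i s_i^2\lesssim\text{Area}(\Y)$, and the second is finite provided the adapted source pieces are roundish enough that $\sum_i\ell_i^2\lesssim|\mathbb D|$.

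The main obstacle is precisely this simultaneous control of the two decompositions under a wild $\varphi$. If one instead used the plain dyadic squares on the source and let them shadow the images of the dyadic boundary arcs $I_{n,k}\subset\partial\mathbb D$, the controlling quantity would become $\sum_{n,k}(\diam\varphi(I_{n,k}))^2$, which need not be finite for an arbitrary homeomorphism; conversely, pulling back the target Whitney tree by $\varphi^{-1}$ can produce wildly eccentric source pieces for which $\sum_i\ell_i^2$ blows up. Making both sums finite at once forces a genuinely adaptive, two-sided construction in which the John carrot chains are used to keep the combinatorial matching between source and target pieces balanced, and in which one must track orientation and the nesting of the image arcs so that the piecewise map neither folds nor tears across chain transitions. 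Because the exponent $p<2$ is borderline, the Hölder split leaves no slack and the John constants must be spent exactly to absorb the chains; this balancing, together with global injectivity, is where essentially all the work lies. Granting it for John disks, Theorem~\ref{thm:quasidisk} follows at once, since the boundary of a quasidisk is a quasicircle and hence $\Y$ satisfies the John condition.
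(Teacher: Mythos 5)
There is a genuine gap, and you have named it yourself: the entire construction of the homeomorphism is deferred in the sentence ``Granting it for John disks, Theorem~\ref{thm:quasidisk} follows at once.'' What you grant is not a technical lemma but the whole theorem. Your energy bookkeeping ($\int_{Q_i}|Dh|^p\sim d_i^{\,p}\ell_i^{\,2-p}$, then H\"older with exponents $2/p$ and $2/(2-p)$) is fine \emph{conditional} on having two decompositions, of $\mathbb D$ and of $\Y$, which (i) are both uniformly roundish with bounded overlap, (ii) are in a combinatorial bijection respected by a glued piecewise-affine homeomorphism, and (iii) force the boundary trace to be the prescribed, arbitrarily wild $\varphi$. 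But (i)--(iii) are mutually antagonistic exactly in the way you describe: a John carrot chain in $\Y$ from the center to a boundary point $y_0$ has a number of Whitney pieces dictated by the geometry of $\Y$, while the corresponding source chain must land on the arc near $\varphi^{-1}(y_0)$, whose size is dictated by $\varphi$; matching chain lengths for one boundary point distorts the pieces available to neighboring chains, and no mechanism is offered to prevent the eccentricity (hence $\sum_i\ell_i^2$, or the gradient bound itself) from blowing up. Saying that ``the John carrot chains are used to keep the combinatorial matching balanced'' is a restatement of the problem, not a solution, and since the exponent is borderline there is no slack to absorb an error. As written, the argument proves nothing beyond the (correct but easy) reductions in your first paragraph.

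It is worth noting that the paper avoids this two-sided matching problem entirely, and in fact runs your proposed reduction in the opposite direction. The paper first builds a self-homeomorphism $H\colon\overline{\mathbb D}\onto\overline{\mathbb D}$ with finite \emph{weighted} energy $\int_{\mathbb D}|DH|^p(1-|H|)^{-p\beta}<\infty$ (Theorem~\ref{thm:weightedextension}); there the source and target triangles carry the \emph{same} dyadic tree, so the combinatorics match automatically and the wildness of the boundary map enters only through the lengths $|I'_{k,j}|$ of the image intervals, which are summable because $\sum_k|I'_{k,j}|$ is the total boundary length. The transfer to $\Y$ is then done not through the Riemann map (whose H\"older exponent can be arbitrarily small) but through a quasiconformal $f\colon\mathbb D\onto\Y$ in $\mathscr C^\alpha$ with $\alpha>\tfrac12$ (Theorem~\ref{thm:holder}, via Rohde's snowflake theorem and Tukia's quasisymmetric extension), upgraded by Proposition~\ref{pro:qcholder} to satisfy $|DF(x)|\le C(1-|x|)^{\alpha-1}$; the threshold $\alpha>\tfrac12$ is exactly what makes $p\beta=p(1-\alpha)<1$ for all $p<2$, so $h=F\circ H$ has finite energy. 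Finally, John disks are deduced \emph{from} quasidisks via Broch's theorem that a John disk in its internal metric is bilipschitz to a quasidisk -- the reverse of your ``John first, quasidisk as a corollary'' plan. A direct Whitney-matching proof for John targets, if you could complete it, would indeed be a more elementary route that bypasses Rohde and Tukia altogether; but completing it is precisely the open part of your proposal.
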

Our argument generalizes to {\it John disks}, see Definition~\ref{def:john}.  A John disk is a  simply connected John domain.  Such domains may be regarded as one-sided quasidisks. They appear in many contexts in analysis~\cite{Br, Br2, CP,  GNV, GHM, MS, Mc,   Va}. John domains  were introduced by F. John~\cite{John} in connection with his work on elasticity.   Roughly speaking, a domain is a John domain if it is possible to travel from one point of the domain to another without going too close to the boundary. John domains are allowed to have inward cusps but not outward cusps.

\begin{theorem}\label{thm:johndisk}
Let $\Y$ be a John disk and $\varphi \colon \partial \mathbb D \onto \partial \Y$ a homeomorphism. Then there exists a homeomorphic extension $h \colon \overline{\mathbb D} \onto \overline { \mathbb Y}  $ of $\varphi$ in $\W^{1,p} (\mathbb D, \C)$ for all $1\le p <2$.
\end{theorem}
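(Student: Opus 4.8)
The plan is to build the extension $h$ by matching a Whitney-type decomposition of the source disk $\dd$ to one of the target $\Y$, and then to reduce the $\W^{1,p}$ bound to a single application of Hölder's inequality. I would fix the Riemann map $\Phi\colon\dd\onto\Y$ (which by Carathéodory extends homeomorphically to the closures) and seek the extension in the form $h=\Phi\circ g$, where $g\colon\overline{\dd}\onto\overline{\dd}$ is a homeomorphism, smooth in the interior, extending the circle homeomorphism $\psi\deff\Phi^{-1}\circ\varphi$. Then $h$ automatically restricts to $\varphi$ on $\partial\dd$, and since $\Phi$ is holomorphic we have $\abs{Dh}=\abs{\Phi'\circ g}\,\abs{Dg}$, so the whole task becomes that of choosing $g$ to absorb the weight $\abs{\Phi'\circ g}$. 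Decompose $\dd$ into the standard dyadic Whitney boxes $Q_{k,j}$, $k\ge 0$, $1\le j\le 2^k$, of side $\sim 2^{-k}$; by the Koebe distortion theorem each image $\Phi(Q_{k,j})$ is roughly round, of diameter $d_{k,j}\sim(1-\abs z)\abs{\Phi'(z)}\sim\dist(\Phi(z),\partial\Y)$ and area $\sim d_{k,j}^2$, and these pieces tile a collar of $\Y$ with $\sum_{k,j}d_{k,j}^2\lesssim\abs{\Y}<\infty$.

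The heart of the argument is to build $g$ so that it carries an adapted, roughly-square decomposition $\{\widetilde Q_\alpha\}$ of $\dd$ onto the boxes $Q_{k,j}$ in a way compatible with the prescribed boundary correspondence $\psi$ on $\partial\dd$. I would do this by a stopping-time (adaptive) subdivision: starting from the boundary arcs, one refines a source box and its matching target box in tandem, halting along a branch once the diameters become comparable, and then defines $g$ on each resulting cell as a fixed bilipschitz model map between two roughly-square cells realizing the required boundary identification. The cells are glued along shared edges so that $g$, hence $h$, is a global homeomorphism; the central region $\{\abs z\le\tfrac12\}$ is sent by a fixed bilipschitz diffeomorphism onto a compact core of $\Y$, contributing only finite energy. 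This is the step where the John hypothesis is essential: the carrot (corkscrew) condition guarantees that every boundary arc of $\partial\Y$ controls an inward region of comparable diameter that is bilipschitz to a disk, so the target cells may be taken uniformly round and the model maps taken with distortion bounded independently of the cell. Equivalently, John disks carry no outward cusps — precisely the feature responsible for the counterexample in \cite{KOext, Zh} — and it is their absence that leaves enough room to parametrise the target cells compatibly and to keep the derivative of each model map comparable to the diameter ratio $d_\alpha/\ell(\widetilde Q_\alpha)$.

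Granting such a decomposition, the energy estimate is soft. On each cell $\widetilde Q_\alpha$ (of side $s_\alpha$, matched to a target piece of diameter $d_\alpha$) the model map gives $\abs{Dh}\lesssim d_\alpha/s_\alpha$, so its contribution is $\lesssim (d_\alpha/s_\alpha)^p s_\alpha^2=d_\alpha^{\,p}s_\alpha^{\,2-p}$. Summing and applying Hölder's inequality with the conjugate exponents $\tfrac2p$ and $\tfrac2{2-p}$ yields
\[
\int_{\dd}\abs{Dh}^p\lesssim\sum_\alpha d_\alpha^{\,p}\,s_\alpha^{\,2-p}\le\Big(\sum_\alpha d_\alpha^2\Big)^{p/2}\Big(\sum_\alpha s_\alpha^2\Big)^{(2-p)/2}\lesssim\abs{\Y}^{\,p/2}<\infty,
\]
the two factors being finite because the target cells are disjoint in $\Y$, so $\sum_\alpha d_\alpha^2\lesssim\abs{\Y}$, while the source cells are disjoint in $\dd$, so $\sum_\alpha s_\alpha^2\lesssim 1$. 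The exponent $2$ here is exactly the dimension, which is why the argument closes for every $p<2$ and degenerates at $p=2$, matching the sharp threshold noted after Question~\ref{existQuestion}.

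I expect the genuine difficulty to lie entirely in the matching step: reconciling the two dyadic tree structures under an arbitrary boundary homeomorphism $\varphi$ while keeping every cell roughly square and every model map of bounded distortion, and simultaneously gluing the pieces into a single homeomorphism. Controlling the boundary parametrisation of the target cells uniformly — so that the naive diameter-ratio bound for $\abs{Dh}$ is actually valid — is where the John geometry must enter quantitatively; once that control is secured, the passage to $\W^{1,p}$ for all $p<2$ is the elementary Hölder computation above.
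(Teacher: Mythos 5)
Your energy bookkeeping (the diameter-ratio bound on each cell followed by H\"older with exponents $\tfrac2p$ and $\tfrac2{2-p}$) is correct \emph{given} the decomposition you postulate, but that decomposition is the entire content of the theorem, and the properties you assign to it cannot in fact be realized simultaneously for an arbitrary boundary homeomorphism. The matched source cell $\widetilde Q_\alpha$ for a Whitney box $Q_{k,j}$ must sit over the preimage arc $\psi^{-1}(J_{k,j})$ of the dyadic arc beneath $Q_{k,j}$, and it must respect the Whitney adjacency structure, since a homeomorphism $g$ carries adjacent cells to adjacent cells. But $\psi^{-1}$ can split an arc of length $s$ into sibling arcs of lengths $0.999\,s$ and $0.001\,s$, and can do so at every generation. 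Then over the short arc a cell of breadth $\sim 0.001\,s$ is forced to span a depth range $\sim s$ in order to meet its (large) parent and lateral neighbor: cells of unbounded eccentricity are unavoidable, so ``roughly square'' source cells and uniformly bilipschitz model maps do not exist. This is not a technicality but exactly where your pointwise estimate collapses: a map from a thin source rectangle onto a round Whitney box has operator norm comparable to (target diameter)/(source \emph{thickness}), not (target diameter)/(source diameter), so $\abs{Dh}\lesssim d_\alpha/s_\alpha$ fails and the H\"older computation has no input. Note the asymmetry: a norm bound by the diameter ratio does survive degeneration of the \emph{target} cell (an affine map from a nondegenerate triangle has norm at most a constant times max target side over source side), but not degeneration of the \emph{source} cell — and your scheme puts the adapted, hence degenerate, cells on the source side.

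A structural sanity check makes the gap vivid: nothing in your estimate uses the John hypothesis — Whitney boxes, Koebe roundness of $\Phi(Q_{k,j})$, and $\sum_\alpha d_\alpha^2\lesssim\abs{\Y}$ hold for \emph{every} Jordan target. So if the matching step could be completed as stated, you would have proved the extension theorem for all Jordan domains, contradicting the counterexample recalled after Question~\ref{existQuestion} (see \cite{KOext, Zh}). The John geometry must therefore defeat precisely the obstruction above, and your sketch offers no mechanism for this. The paper sidesteps it by reversing the roles of the two sides: in Theorem~\ref{thm:weightedextension} the source cells are the uniform dyadic pentagons (nondegenerate, so the diameter-ratio norm bound is legitimate) while the adapted, possibly degenerate cells live on the target side, and the price is that one must prove a \emph{weighted} estimate with weight $(1-\abs{h})^{-p\beta}$, $p\beta<1$. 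The John condition then enters through an entirely separate channel: Broch's theorem \cite{Br3} (a John disk with its internal metric is bilipschitz to a quasidisk), Rohde's snowflake model \cite{Ro} and Tukia's extension \cite{Tu1} produce a quasiconformal $F\colon\dd\onto\Y$ of H\"older class $\alpha>\tfrac12$ (Theorem~\ref{thm:holder}), whose derivative blow-up $(1-\abs{x})^{\alpha-1}$ is exactly what the weight absorbs in Theorem~\ref{thm:nonsense}. To salvage your route you would have to either prove a weighted estimate of this kind yourself or show that John geometry genuinely permits a both-sides-round matching; as written, the proposal proves nothing beyond the elementary H\"older step.
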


The key in our proofs  is the construction the following 
self-homeomorphic extension. 

\begin{theorem}\label{thm:weightedextension}
Let $1\le p <2$ and $p\beta <1$. Then every circle homeomorphism $\varphi \colon \partial \mathbb D \onto \partial \mathbb D$ has a locally Lipschitz 
continuous, homeomorphic extension $h \colon \overline{\mathbb D} \onto \overline{\mathbb D}$ for which
\begin{equation}\label{energyweighted}
\mathcal E_{p,\beta }[h] = \int_{\mathbb D} \frac{\abs{Dh(z)}^p}{(1-\abs{h(z)})^{p\beta}}\, \dtext z < \infty \, .
\end{equation}
\end{theorem}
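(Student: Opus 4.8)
The plan is to pass to the universal cover and then to build $h$ by an adaptive, genuinely non-separable rule in which the \emph{depth} of the image near $\partial\mathbb{D}$ is dictated by the local oscillation of the boundary data rather than by the radial coordinate of the source. Writing $\varphi(e^{i\theta})=e^{i\gamma(\theta)}$ for an increasing homeomorphism $\gamma\colon\mathbb{R}\to\mathbb{R}$ with $\gamma(\theta+2\pi)=\gamma(\theta)+2\pi$, the boundary map is encoded by the monotone function $\gamma$, whose increments over any partition of a period sum to $2\pi$. It is this single conservation law, together with $p<2$ and $p\beta<1$, that will drive the energy bound.

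For the construction, set $t=1-|z|$ and $z=|z|e^{i\theta}$. Let $\Gamma(\theta,t)$ be a mollification of $\gamma$ at angular scale $t$ (for instance $\Gamma(\theta,t)=t^{-1}\int_\theta^{\theta+t}\gamma$), which is increasing in $\theta$, smooth for $t>0$, and satisfies $\Gamma(\theta,t)\to\gamma(\theta)$ as $t\to0$. For the modulus I would prescribe the radial deficit $1-|h|$ to be comparable to the oscillation $\osc(\gamma;\theta,t):=\gamma(\theta+t)-\gamma(\theta)$ of the data over the arc of length $t$, suitably smoothed, and set $h(z)=(1-\delta(\theta,t))e^{i\Gamma(\theta,t)}$ with $\delta\approx\osc(\gamma;\theta,t)$. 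The point of tying $1-|h|$ to the oscillation is that where $\gamma$ expands violently—and where the angular stretching $\partial_\theta\Gamma\approx\osc(\gamma;\theta,t)/t$ is therefore large—the image is kept a definite distance $\approx\osc(\gamma;\theta,t)$ away from $\partial\mathbb{D}$, so the weight $(1-|h|)^{-p\beta}$ does not see the worst stretching at the worst places. This is exactly the feature that a radial (separable) extension $|h|=\rho(|z|)$ cannot have, and such separable extensions only reach the smaller range $p(1+\beta)<2$.

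The energy estimate is cleanest on a Whitney decomposition. Organize the collar of $\mathbb{D}$ into Whitney boxes $Q$, each over a boundary arc $I_Q$ with $\ell(Q)\approx|I_Q|\approx\dist(Q,\partial\mathbb{D})$, and set $a_Q:=\gamma(I_Q)$. On $Q$ one has $|Dh|\approx a_Q/\ell(Q)$, $1-|h|\approx a_Q$, and $|Q|\approx\ell(Q)^2$, so the contribution of $Q$ to $\mathcal{E}_{p,\beta}[h]$ is comparable to $a_Q^{\,p(1-\beta)}\,\ell(Q)^{2-p}$. Summing over the boxes at a fixed scale $\ell=2^{-k}$ and using $\sum_{\ell(Q)=2^{-k}}a_Q=2\pi$ gives the bound $2^{-k(2-p)}\sum_Q a_Q^{\,s}$ with $s=p(1-\beta)$. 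When $s\ge1$ the superadditivity $\sum_Q a_Q^{\,s}\le(\sum_Q a_Q)^{s}=(2\pi)^{s}$ leaves $\sum_k 2^{-k(2-p)}$, which converges precisely because $p<2$; when $s<1$ concavity gives $\sum_Q a_Q^{\,s}\le(\#Q)^{1-s}(2\pi)^{s}\approx 2^{k(1-s)}(2\pi)^s$, and the scale sum becomes $\sum_k 2^{k(p\beta-1)}$, which converges precisely because $p\beta<1$. Thus the two hypotheses enter in exactly complementary regimes, and $\mathcal{E}_{p,\beta}[h]<\infty$ on the whole stated range.

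I expect the main difficulty to be not the energy bound but the verification that the adaptive rule defines an honest homeomorphism of $\overline{\mathbb{D}}$ that is locally Lipschitz inside. Local Lipschitz continuity in the open disk is routine—on a compact subset only finitely many boxes appear and $|Dh|\approx a_Q/\ell(Q)$ is bounded there—and continuity up to $\partial\mathbb{D}$ with $h|_{\partial\mathbb{D}}=\varphi$ follows from $a_Q\to0$ by uniform continuity of $\gamma$. The delicate point is injectivity: the smoothing needed for Lipschitz regularity is in tension with the monotonicity that guarantees the images of the boxes tile the target collar without overlap, and the neighboring pieces—whose prescribed depths $a_Q$ may differ by large factors—must be matched into a continuous, injective global map. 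Reconciling these, most likely through a stopping-time refinement of the arcs $I_Q$ so that each image box is a genuine Whitney box of the target and the pieces fit edge to edge, is where the real work lies.
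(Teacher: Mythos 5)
Your guiding idea---prescribing the depth $1-\abs{h}$ of the image to be comparable to the oscillation of the boundary data over the local arc, rather than to the radial coordinate of the source---is precisely the mechanism of the paper's proof, and your Whitney-box energy accounting reproduces the paper's estimate essentially verbatim: a per-cell contribution $a_Q^{\,p(1-\beta)}\ell(Q)^{2-p}$, summed over scales using $\sum_Q a_Q = 2\pi$, with superadditivity in the regime $p(1-\beta)\ge 1$ (where $p<2$ closes the sum) and concavity/H\"older in the regime $p(1-\beta)<1$ (where $p\beta<1$ closes it). The paper's final sum $\sum_j 2^{-j(2-p)}\sum_k \abs{I'_{k,j}}^{p(1-\beta)}$ and its two-case treatment are identical to yours.

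The genuine gap is that no homeomorphism is ever actually produced, and it sits exactly where you say "the real work lies." The mollified formula $h=(1-\delta)e^{i\Gamma}$ is not shown to be injective, and for a general circle homeomorphism this is not a routine verification: $\gamma$ may be strictly increasing yet singular, with $\gamma'=0$ almost everywhere, in which case the Jacobian of $(\theta,t)\mapsto(\Gamma,\delta)$ degenerates a.e.\ and injectivity does not follow from the monotonicity of $\Gamma$ in $\theta$ and of $\delta$ in $t$; nothing in the proposal rules out two different arcs having the same $\gamma$-increment and the same $\gamma$-average. Likewise, the alternative discrete route requires the image boxes, whose prescribed depths $a_Q$ can jump by large factors between neighbors, to tile the target collar edge-to-edge, and the "stopping-time refinement" meant to arrange this is never carried out. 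The paper's construction is designed so that injectivity is automatic rather than verified after the fact: it flattens the boundary to a triangle $T$, decomposes $T$ into pentagonal cells whose vertices are the apex points of the dyadic intervals and of their children, and maps each cell piecewise affinely onto the corresponding pentagon built from apex points over the \emph{image} intervals $\varphi(I)$. Monotonicity of $\varphi$ forces the image cells to tile $T$ with shared vertices and edges, so the glued map is a homeomorphism by construction, and each affine piece satisfies exactly the bounds you postulate ($\abs{DH}\lesssim \abs{I'}/\abs{I}$, target depth $\lesssim\abs{I'}$); one further integration, in which two vertices of the image triangle are pushed onto the real line and the singular weight $\dist(H)^{-p\beta}$ is integrated over a covering rectangle, supplies the per-cell factor $1/(1-p\beta)$ that your heuristic $1-\abs{h}\approx a_Q$ glosses over. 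In short, your proposal contains the paper's estimate but not the paper's map, and the map is the content of the theorem.
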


Note that if $\beta p \le 0$, then  the harmonic extension would simply give the desired homeomorphism~\cite{IMS, Ve}. However, when $0 <\beta p <1$, the harmonic extension does not seem to work and a new way to construct Sobolev homeomorphisms  is  needed.  The above weighted homeomorphic 
extension theorem  gives the requested $\W^{1,p}$-Sobolev homeomorphic 
extension in Question~\ref{existQuestion} for all $p<2$ provided $\Y$ receives an $\alpha$-H\"older continuous  quasiconformal mapping from $\mathbb D$ with $\alpha > \frac{1}{2}$, see Theorem~\ref{thm:nonsense}. 

\begin{question}\label{q:quasi}
Let $\Y \subset \mathbb C$ be a simply connected Jordan domain. Under which  conditions on $\Y$ does there exist a quasiconformal mapping $f \colon \mathbb D \onto \Y$ in $\mathscr C^\alpha (\mathbb D , \C)$ with $\alpha > \frac{1}{2}$?
\end{question}

Recall that it is characteristic for a quasiconformal  mapping to behave locally at every point like a radial stretching, see~\cite{IOZ1}.  Without going into detail, improving the H\"older regularity of $f$ at $x_\circ$ automatically means lowering the H\"older continuity for the inverse map at $y_\circ=f(x_\circ)$.  We expect that a  quasiconformal change of variables in Question~\ref{q:quasi}  (with gained H\"older continuity) exists if the boundary mapping of the  conformal mapping  lies in $\mathscr C^\varepsilon (\partial \mathbb D)$ for some  $\varepsilon >0$. We verify that such a quasiconformal mapping exists if $\Y$ is a quasidisk.

\begin{theorem}\label{thm:holder}
Let $\Y$ be a quasidisk. Then there exists a quasiconformal mapping $f \colon \mathbb D \onto \Y$ in $\mathscr C^\alpha (\mathbb D, \mathbb C)$ with some $\alpha > \frac{1}{2}$.
\end{theorem}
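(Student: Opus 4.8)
One route I would take is to repair the Riemann map by a quasiconformal reparametrisation that is concentrated near the boundary points where it expands the most, trading regularity of the inverse for regularity of the map itself, as the discussion after Question~\ref{q:quasi} suggests. Let $\psi\colon\mathbb D\onto\Y$ be a Riemann map. Since a quasidisk is a uniform domain (John together with linearly locally connected), both $\psi$ and $\psi^{-1}$ extend Hölder continuously to the closures, say $\psi\in\mathscr C^{\varepsilon}$ for some $\varepsilon\in(0,1]$; together with the Koebe distortion estimate $\abs{\psi'(z)}\approx \dist(\psi(z),\partial\Y)/(1-\abs z)$ this gives $\dist(\psi(z),\partial\Y)\le C(1-\abs z)^{\varepsilon}$. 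The conformal map by itself is \emph{not} enough: a thin circular sector of opening $\theta$ is a quasidisk whose Riemann map behaves like $(z-z_0)^{\theta/\pi}$ at the tip, hence is only $(\theta/\pi)$-Hölder, and $\theta/\pi<\tfrac12$ once $\theta<\pi/2$. So a genuinely non-conformal extension is unavoidable.

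I would therefore look for $f=\psi\circ g$ with $g\colon\mathbb D\onto\mathbb D$ quasiconformal. If $g$ fixed $\partial\mathbb D$ pointwise, then $f$ and $\psi$ would share the same boundary values and $f$ could be no better than $\psi$ on $\partial\mathbb D$; hence $g$ must reparametrise the circle, compressing the parameter towards the points where $\psi$ expands. The sector is the model: there one may take $f$ to be the angular rescaling $\rho e^{i\phi}\mapsto \rho\,e^{i(\theta/\beta)\phi}$, which is bi-Lipschitz, so that the corresponding $g=\psi^{-1}\circ f$ is the quasiconformal map $\rho e^{i\phi}\mapsto \rho^{\pi/\theta}e^{i(\pi/\beta)\phi}$ that contracts the boundary towards the tip. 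This local template is what I would globalise.

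To globalise, decompose $\mathbb D$ into Whitney squares $Q_j$ with centres $c_j$. The quasidisk hypothesis yields \emph{uniform} distortion: each $\psi(Q_j)$ has bounded eccentricity with $\diam\psi(Q_j)\approx(1-\abs{c_j})\,\abs{\psi'(c_j)}\approx\dist(\psi(c_j),\partial\Y)$. I would build $g$ scale by scale so that on each $Q_j$ the composition imitates the sector template, the radial compression absorbing the expansion of $\psi$, and glue the pieces into a single homeomorphism whose dilatation is controlled by the quasidisk constants. The Hölder bound for $f$ then follows by integrating $\abs{Df}\lesssim \abs{\psi'(g(z))}\,\abs{Dg(z)}$ along hyperbolic-type curves; tuning the compression by a parameter $t$, the radial integral is dominated by a geometric series over dyadic scales that converges precisely in the range giving an exponent $\alpha>\tfrac12$, and this same bookkeeping quantifies how much regularity of $f^{-1}$ is given up.

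The hard part is exactly this global construction: producing one quasiconformal $g$, with dilatation bounded solely in terms of the quasidisk constants, whose boundary reparametrisation simultaneously compresses towards every expanding point of the (possibly fractal) boundary, and then verifying that the resulting modulus of continuity genuinely beats $\tfrac12$ rather than merely some small positive exponent. Matching the local sector templates across neighbouring Whitney squares into a globally defined homeomorphism, and extracting the uniform dilatation bound, is where the two-sided quasidisk geometry — rather than the mere John condition — must be used.
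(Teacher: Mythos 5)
Your framing is sound as far as it goes: seeking $f=\psi\circ g$ with $g$ a quasiconformal self-map of $\mathbb D$ is exactly equivalent to the theorem, your sector example correctly shows that the conformal map alone cannot work, and the local bi-Lipschitz angular-stretch template is fine. But the proposal stops precisely where the proof must begin. For a genuinely fractal quasicircle (the Koch snowflake, say) the expansion of $\psi$ is not concentrated at isolated tips: it occurs at every boundary point and at every scale, so there is no finite family of "expanding points'' toward which to compress. What is needed is a single homeomorphism $g$, with dilatation bounded only by the quasidisk constants, whose boundary correspondence performs the right compression coherently across all scales simultaneously, together with a proof that the composed map has H\"older exponent \emph{strictly} greater than $\tfrac12$. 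You name this as "the hard part'' and leave it entirely open: no rule is given for choosing the compression on a given Whitney square, no argument that the gluings across neighbouring squares of different generations yield a homeomorphism with uniformly bounded dilatation, and -- most importantly -- no identification of the quantitative feature of two-sided quasidisk geometry that forces the exponent past $\tfrac12$ rather than merely producing some $\alpha>0$. Since the whole content of the theorem is the strict inequality $\alpha>\tfrac12$, this is a genuine gap, not a detail.

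For contrast, the paper sidesteps interior constructions altogether and works on the boundary. By Rohde's theorem every quasicircle is bi-Lipschitz equivalent to a snowflake-type curve built from a parameter $p\in[\tfrac14,\tfrac12)$, so one may assume $\partial\Y$ is such a curve. The combinatorial structure of the snowflake then permits an explicit parametrization $g\colon S_0\to\partial\Y$ by the unit square, defined generation by generation, for which one verifies both quasisymmetry and the H\"older bound $\diam\bigl(g(I)\bigr)\leq C\,\ell(I)^\alpha$ with $\alpha$ determined by $\bigl(\tfrac14\bigr)^{\alpha}=p$; the inequality $p<\tfrac12$ -- which is exactly where the quasidisk (bounded turning) geometry enters, and which fails for general John disks -- gives $\alpha>\tfrac12$. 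Tukia's extension theorem for quasisymmetric embeddings, in a quantitative form preserving the H\"older exponent, then produces the quasiconformal map of the disk, and the bi-Lipschitz equivalence transfers it back to $\Y$. All of the multi-scale bookkeeping that your Whitney-square scheme would have to face is absorbed into Rohde's classification plus a purely combinatorial length computation; if you want to salvage your approach, you would need to supply an analogue of that computation yourself, and it is not clear how to do so without effectively reconstructing the snowflake parametrization.
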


This mapping is obtained by first
constructing a quasisymmetric map from $\partial \mathbb D$ onto $\partial \Y$ which lies in $\mathscr C^\alpha (\partial \mathbb D)$ with $\alpha >\frac{1}{2}$, and then
applying an extension result of P. Tukia~\cite{Tu}. To simplify  the construction of the quasisymmetric map we rely on a result of S. Rohde~\cite{Ro}, which states that any quasicircle
is bilipschitz equivalent to a snowflake-type curve. This allows us to 
assume that $\partial \Y$ is a Rohde-type snowflake  curve. As we have already 
indicated, Theorem~\ref{thm:quasidisk} follows from our weighted extension result and Theorem~\ref{thm:holder}. We will deduce Theorem~\ref{thm:johndisk} to
 Theorem~\ref{thm:quasidisk}.

\section{Definitions}

\begin{definition}\label{def:quasi}
Let $\Omega$ and $\Omega'$ be  planar domains. A homeomorphism $F \colon \Omega \onto \Omega'$ is a {\it quasiconformal} mapping if $F\in \W^{1,1}_{\loc} (\Omega, \C)$ and
there exists a constant $1\le K <\infty$ such that
\[\abs{DF(x)}^2 \le K \det DF(x) \qquad \textnormal{ a.e. in } \Omega \, . \]
\end{definition}
Hereafter $\abs{\cdot}$ stands for the operator  norm of matrices.

\begin{definition}\label{def:john}
A simply connected planar domain $\Y$ with at least two boundary points is a {\it $c$-John disk} if any pair of points $y_1,y_2\in \Y$ can be joined by a rectifiable curve $\gamma \subset \Y$
such that
\begin{equation}
\min_{i=1,2} \ell (\gamma (y_i, y)) \le c \, \dist (y, \partial \Y)
\end{equation}
\end{definition}
Hereafter, $\ell (\gamma (y_i, y)) $ denotes the length of the subcurve of $\gamma$ between $y_i$  and $y$, and   $ \dist (y, \partial \Y)$ is the  distance from  $y$ to the boundary $\partial \Y$. In the case when the value of the constant $c$ plays no role 
we simply say that $\Y$ is a John disk. For equivalent characterizations  we refer to~\cite{NV}.

\section{Proofs of extension results}
In this section we outline the proofs of our main results. The main steps are Theorem \ref{thm:weightedextension} and Theorem \ref{thm:holder}, which are proved in Sections \ref{sec:weightedextension} and \ref{sec:holder} respectively.

\emph{Step 1.} We first show that if Theorem \ref{thm:weightedextension} holds, then any domain which admits a quasiconformal mapping from the unit disk in the H\"older class $C^\alpha (\overline{\dd} , \C)$ for $\alpha > \frac12$ is suitable for extending a given boundary homeomorphism as a Sobolev homeomorphism for $p < 2$.

\begin{proposition}\label{pro:qcholder}

Let $\Y\subset \mathbb C$ be a Jordan domain and $f \colon \overline{\dd} \onto \overline{\Y}$ a homeomorphism. Suppose that $f$ is  a quasiconformal mapping on $\dd$ and  $f \in \mathscr C^\alpha (\overline{ \mathbb D}, \mathbb C)$ for some $\alpha > \frac{1}{2}$. Then there exists a homeomorphism $F\colon \overline{\dd} \onto \overline{\Y}$ which is quasiconformal  on $\dd$ and there is  a constant $C>0$ such that

\begin{equation}\label{eq:stvclaim}
\abs{DF(x)} \le  \frac{C}{(1-\abs{x})^{1-\alpha}}  \qquad \textnormal{ for almost every } x \in \mathbb D \, . 
\end{equation}

\end{proposition}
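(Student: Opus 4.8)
The plan is to keep the boundary correspondence and the \emph{coarse}, Whitney\-scale behaviour of $f$ while replacing it in the interior by a map that stretches uniformly at each scale. The estimate to be proven is, up to constants, the infinitesimal upper half of the quasihyperbolic distortion estimate $\abs{DF(z)}\lesssim \dist(F(z),\D\Y)/(1-\abs{z})$, and its usefulness here is that the numerator is already controlled by the hypothesis on $f$: writing $z^\ast=z/\abs{z}\in\D\dd$ and using $f\in\mathscr C^\alpha(\overline{\dd})$ we get $\dist(f(z),\D\Y)\le\abs{f(z)-f(z^\ast)}\le C(1-\abs{z})^\alpha$, so any quasiconformal $F$ with the same boundary growth that also obeys this infinitesimal inequality automatically satisfies $\abs{DF(z)}\lesssim(1-\abs{z})^{\alpha-1}$. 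The catch is that the infinitesimal inequality can \emph{fail} for a bona fide quasiconformal map at interior points: the radial stretch $z\mapsto z\abs{z}^{\alpha-1}$ is $K$\-quasiconformal and lies in $\mathscr C^\alpha(\overline{\dd})$, yet $\abs{Df}\to\infty$ at the origin. Thus $F=f$ is in general inadmissible, and the task is to regularise $f$ in the interior, removing such interior spikes, without spoiling the boundary growth that supplies the exponent $\alpha$.

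To do this I would fix a Whitney decomposition $\{Q\}$ of $\dd$ into dyadic squares with $\diam Q\approx\dist(Q,\D\dd)\approx 1-\abs{z}$ on $Q$. Since $f\in\mathscr C^\alpha$, every image satisfies $\diam f(Q)\le C(\diam Q)^\alpha$, and since $f$ is quasiconformal the sets $f(Q)$ have bounded eccentricity and neighbouring ones are of comparable size. Using this I would build a compatible Whitney\-type tiling $\{R_Q\}$ of $\Y$ by Jordan pieces with \emph{rectifiable} boundaries, with $\diam R_Q\approx(\diam Q)^\alpha$, whose adjacency pattern and exhaustion of $\Y$ toward $\D\Y$ match those of $\{Q\}$ under the boundary map $f|_{\D\dd}$. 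Defining $F$ on each $Q$ to be a homeomorphism onto $R_Q$ that is bi\-Lipschitz \emph{to scale}, namely $\Lip(F|_Q)\lesssim \diam R_Q/\diam Q\approx(\diam Q)^{\alpha-1}\approx(1-\abs{z})^{\alpha-1}$, yields precisely the pointwise bound on each square.

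Gluing the pieces along common edges produces a homeomorphism $F\colon\overline{\dd}\onto\overline{\Y}$ carrying the boundary values of $f$. The edges form a set of measure zero, and, crucially, since $\alpha>\frac12$ the bound $\abs{DF(z)}\lesssim(1-\abs{z})^{\alpha-1}$ is square\-integrable over $\dd$; hence $F\in\W^{1,2}_{\loc}(\dd,\C)$ and the uniformly bounded\-distortion pieces assemble into a map that is quasiconformal on $\dd$, while the scalewise Lipschitz bounds give exactly $\abs{DF(z)}\le C(1-\abs{z})^{\alpha-1}$. The interior spikes are gone because each $R_Q$ has rectifiable boundary and $F|_Q$ is bi\-Lipschitz to scale, whereas the possibly non\-rectifiable roughness of $\D\Y$ reappears only in the limit $\diam Q\to 0$, consistently with $\sum_Q\diam R_Q\approx\sum_k 2^{k(1-\alpha)}=\infty$. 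The main obstacle, and the only place where the quasiconformality of $f$ rather than merely its Hölder continuity is used, is the construction of the target tiling $\{R_Q\}$: one must produce rectifiable\-boundary Jordan pieces that simultaneously tile $\Y$, reproduce the adjacency combinatorics of the Whitney squares of $\dd$ under $f|_{\D\dd}$, carry diameters $\approx(\diam Q)^\alpha$, and fit together into a globally injective map of bounded distortion all the way up to $\D\Y$. Controlling the shapes of these pieces and their matching across scales is the technical heart of the argument.
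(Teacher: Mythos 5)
Your proposal has a genuine gap: it is a plan whose central construction is never carried out, and you say so yourself when you call the target tiling $\{R_Q\}$ ``the technical heart of the argument.'' Everything before that point is correct and in fact mirrors the paper's strategy at the conceptual level --- the desired estimate is exactly the infinitesimal quasihyperbolic bound $\abs{DF(x)}\lesssim \dist(F(x),\partial \Y)/(1-\abs{x})$ combined with the observation that H\"older continuity of $f$ controls $\dist(f(x),\partial\Y)\le C(1-\abs{x})^\alpha$, and you correctly note (via the radial stretch) that $F=f$ itself need not satisfy the infinitesimal bound, so $f$ must be replaced by a map that stretches ``uniformly at each scale'' while keeping the same coarse boundary behaviour. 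But producing such a replacement \emph{is} the proposition: constructing Jordan pieces with rectifiable boundaries and bounded geometry that tile an arbitrary Jordan domain $\Y$, match the adjacency combinatorics of the Whitney squares, respect the prescribed boundary correspondence, and glue to a globally injective map of uniformly bounded distortion is not a routine verification, and nothing in your sketch indicates how to do it. The paper closes precisely this hole by citing the Sullivan--Tukia--V\"ais\"al\"a approximation theorem \cite[7.12. Corollary]{TV}: it yields a quasiconformal $F\colon\dd\onto\Y$ that is bi-Lipschitz in the quasihyperbolic metrics (whence $\abs{DF(x)}\le C\,\dist(F(x),\partial\Y)/(1-\abs{x})$ by \cite[6.5. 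Lemma]{TV}) and quasihyperbolically uniformly close to $f$; a lemma of Gehring--Palka \cite{GP} then converts quasihyperbolic closeness into $\dist(F(x),\partial\Y)\le 2\dist(f(x),\partial\Y)$, and the H\"older bound finishes the proof. In effect you are proposing to reprove the Tukia--V\"ais\"al\"a theorem by hand; either carry out that Whitney-type construction in full (a substantial undertaking) or quote it, as the paper does.

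A secondary flaw: you require $\diam R_Q\approx(\diam Q)^\alpha$ with two-sided comparability. Only the upper bound $\diam R_Q\lesssim(\diam Q)^\alpha$ is available or needed; the lower bound fails for a general Jordan target, since $\Y$ may have arbitrarily thin parts and $f$ may compress some Whitney squares far below the H\"older scale, so pieces of bounded geometry and diameter comparable to $(\diam Q)^\alpha$ simply cannot fit there. As stated, your tiling requirements are not merely unconstructed but unachievable; they should be weakened to the one-sided bound, which is all your Lipschitz-to-scale estimate uses.
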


\begin{proof}

The Sullivan-Tukia-V\"ais\"al\"a approximation theorem~\cite[7.12. Corollary]{TV} provides us with 	a quasiconformal mapping $F \colon \mathbb D \onto \Y$ such that  for any $  \varepsilon  >0 $ we have

\begin{equation}\label{eq:stv1}
k_\Y \big(f(x) , F(x)\big) \le  \epsilon
\end{equation}

and

\begin{equation}\label{eq:stv2}
C^{-1}\, k_\Y \big(F(x) , F(y)\big) \le k_\mathbb D (x,y) \le C \, k_\Y \big(F(x) , F(y)\big) 
\end{equation}

for every $x,y \in \mathbb D$. Here $k_\Omega$ denotes the quasihyperbolic metric in a domain $\Omega$. The constant $C \ge 1$ in~\eqref{eq:stv2} depends on the original mapping $f$  but is  independent of $x$ and $y$. It follows  from~\eqref{eq:stv2} that $F$ is bi-Lipschitz in the quasihyperbolic metrics and locally bi-Lipschitz in the Euclidean metrics and

\begin{equation}\label{eq:stv3}
\abs{DF(x)} \le C \, \frac{\dist (F(x), \partial \Y)}{1-\abs{x}}
\end{equation}  

where $C\ge 1$ is  independent of $x$.  For a proof of~\eqref{eq:stv3} we refer to  e.g~\cite[6.5. Lemma]{TV}. It also follows from~\eqref{eq:stv1} that $f=F$ on $\partial \mathbb D$.

From Lemma 2.1 in~\cite{GP} it follows that

\begin{equation}
\log \left( \frac{\abs{f(x)-F(x)}}{d} +1\right) \le k_\Y (f(x), F(x))
\end{equation}

where $d= \min \{\dist\big (f(x) , \partial \Y \big ), \dist\big (F(x) , \partial \Y \big )  \}$. Combining this with~\eqref{eq:stv1} 

we have

\begin{equation}
\log \left( \frac{\abs{f(x)-F(x)}}{d} +1\right) \le \varepsilon \, .
\end{equation}

Thus for $\varepsilon>0$ small enough we have
\[\abs{f(x)-F(x)} \le (e^\varepsilon -1) d < \frac{d}{2}\]
and thus
\begin{equation}\label{eq:stv4}
\dist \big(F(x), \partial \Y \big) \le 2\, \dist (f(x), \partial \Y) \leq 2C (1-|x|)^\alpha,
\end{equation}
where the last inequality follows from the assumption  $f \in \mathscr C^\alpha (\overline{ \mathbb D}, \mathbb C)$. Now the asserted estimate~\eqref{eq:stvclaim} simply follows from~\eqref{eq:stv3} and ~\eqref{eq:stv4}.

\end{proof}

\begin{theorem}\label{thm:nonsense}
Let $\Y$ be a Jordan domain. Suppose that there is a homeomorphism $f \colon \overline{\dd} \onto \overline{\Y}$ such that $f$ is  quasiconformal on $\dd$ and  $f\in \mathscr C^\alpha (\overline{\dd} , \C)$ for some $\alpha > \frac{1}{2}$. Then every $\varphi \colon \partial \dd \onto \partial \Y$ admits a homeomorphic extension $h \colon \overline{\dd} \onto \overline{\Y}$ of Sobolev class $\W^{1,p} (\dd, \C)$ for all $1\le p <2$.
\end{theorem}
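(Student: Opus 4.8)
The plan is to reduce the problem to the self-homeomorphic extension furnished by Theorem~\ref{thm:weightedextension}, using the quasiconformal map of Proposition~\ref{pro:qcholder} as a change of target. Fix $1\le p<2$. Applying Proposition~\ref{pro:qcholder} to the hypothesized map $f$ produces a homeomorphism $F\colon\overline{\dd}\onto\overline{\Y}$, quasiconformal on $\dd$, which agrees with $f$ on $\partial\dd$ and obeys the gradient bound
\[
\abs{DF(x)}\le \frac{C}{(1-\abs{x})^{1-\alpha}}\qquad\text{for a.e. }x\in\dd.
\]
Since $F$ restricts to a homeomorphism $\partial\dd\onto\partial\Y$, the composite $\psi\deff (F|_{\partial\dd})^{-1}\circ\varphi\colon\partial\dd\onto\partial\dd$ is a circle homeomorphism, which is what we feed into the weighted extension theorem.

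Set $\beta\deff 1-\alpha$. Because $\alpha>\tfrac12$ we have $0<\beta<\tfrac12$, so $p\beta<2\beta<1$; thus the hypotheses of Theorem~\ref{thm:weightedextension} are met and we obtain a locally Lipschitz homeomorphic extension $g\colon\overline{\dd}\onto\overline{\dd}$ of $\psi$ with $\mathcal E_{p,\beta}[g]<\infty$. Define $h\deff F\circ g\colon\overline{\dd}\onto\overline{\Y}$. This is a homeomorphism, and on $\partial\dd$ we have $h=F\circ\psi=F\circ(F|_{\partial\dd})^{-1}\circ\varphi=\varphi$, so $h$ is the required extension of the boundary data.

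It then remains to estimate the energy. Granting the chain rule $Dh(z)=DF(g(z))\,Dg(z)$ a.e., the bound for $F$ evaluated at the interior point $g(z)\in\dd$ gives
\[
\abs{Dh(z)}^p\le \abs{DF(g(z))}^p\,\abs{Dg(z)}^p\le C^p\,\frac{\abs{Dg(z)}^p}{(1-\abs{g(z)})^{p(1-\alpha)}},
\]
so that $\int_{\dd}\abs{Dh}^p\le C^p\,\mathcal E_{p,\beta}[g]<\infty$ with $\beta=1-\alpha$. Since $h$ is bounded and continuous on $\overline{\dd}$, this yields $h\in\W^{1,p}(\dd,\C)$, which is the assertion.

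The one point requiring genuine care — and the step I expect to be the main obstacle — is justifying the chain rule, i.e.\ that $h=F\circ g$ is a bona fide $\W^{1,p}$-mapping with weak derivative $DF(g(\cdot))Dg(\cdot)$, given that $F$, being merely quasiconformal, is differentiable only almost everywhere. The heart of the matter is to show that $F$ is differentiable at $g(z)$ for almost every $z$. Writing $N\subset\dd$ for the null set on which $F$ fails to be differentiable and $A=\{z:J_g(z)\ne 0\}$, the area formula for the Lipschitz homeomorphism $g$ gives $\int_{A\cap g^{-1}(N)}\abs{J_g}=\abs{g(A)\cap N}\le\abs{N}=0$, whence $\abs{A\cap g^{-1}(N)}=0$ because $J_g\ne 0$ on $A$; the degenerate set $\{J_g=0\}$ is handled separately by a standard argument. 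This is precisely the composition theorem for a locally Lipschitz homeomorphism with a quasiconformal map, and once it is in force the interior regularity $h\in\W^{1,p}_{\loc}(\dd,\C)$, together with the global energy bound and the continuity of $h$ up to $\partial\dd$, upgrades to $h\in\W^{1,p}(\dd,\C)$ and completes the proof.
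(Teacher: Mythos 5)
Your proposal is correct and follows essentially the same route as the paper: apply Proposition~\ref{pro:qcholder} to get $F$ with the gradient bound, set $\psi=F^{-1}\circ\varphi$ and $\beta=1-\alpha$, invoke Theorem~\ref{thm:weightedextension}, and estimate the energy of $h=F\circ H$. The chain-rule issue you single out as the main obstacle is handled in the paper more simply: both $F$ (by Proposition~\ref{pro:qcholder}, being bi-Lipschitz in the quasihyperbolic metrics) and the extension $H$ are \emph{locally Lipschitz} on $\dd$, so the composition is locally Lipschitz and the pointwise bound $\abs{Dh(z)}\le\abs{DF(H(z))}\,\abs{DH(z)}$ holds almost everywhere without any area-formula argument.
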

\begin{proof}
Via Proposition \ref{pro:qcholder} we find a quasiconformal map $F$ which takes $\dd$ onto $\yy$ and satisfies
\begin{equation}\label{eq:gradest}
|DF(x)| \leq \frac{C}{(1-|x|)^{1-\alpha}} \qquad \textnormal{ for almost every } x \in \mathbb D.
\end{equation}

Next, we define a homeomorphism  $\psi = F^{-1} \circ \varphi \colon \partial \dd \onto \partial \dd$. Let $\beta=1-\alpha$. Then applying  the extension result Theorem~\ref{thm:weightedextension} gives a Sobolev homeomorphism $H \colon \overline{\dd} \onto \overline{\dd}$ with $H=\psi$ on $\partial \dd$ and
\begin{equation}\label{eq:Henergyest}\mathcal E_{p,\beta }[H] = \int_{\mathbb D} \frac{\abs{DH(z)}^p}{(1-\abs{H(z)})^{p\beta}}\, \dtext z < \infty \, . \end{equation}
Defining $h=F\circ H \colon \overline{\dd} \onto \overline{\Y} $ we have $h=\varphi$ on $\partial \dd$ and since both $F,H$ are locally Lipschitz, $h$ is
locally Lipschitz and  
\[
\int_\dd \abs{Dh(z)}^p \, \dtext z \le \int_\dd \abs{DF\big (H(z)\big )}^p \abs{DH(z)}^p\, \dtext z \, . 
\]
Combining this with~\eqref{eq:gradest} we obtain
\[
\int_\dd \abs{Dh(z)}^p \, \dtext z \le C  \int_{\mathbb D} \frac{\abs{DH(z)}^p}{(1-\abs{H(z)})^{p\beta}}\, \dtext z  \]
and the claim $h\in \W^{1,p} (\dd, \C)$ follows from~\eqref{eq:Henergyest}.
\end{proof}

\emph{Step 2.} Combining the statements of Theorem \ref{thm:nonsense} and Theorem \ref{thm:holder} now proves one of our main results, Theorem \ref{thm:quasidisk}. It remains to show how Theorem \ref{thm:johndisk} then follows from Theorem \ref{thm:quasidisk}.

\begin{proof}[Proof of Theorem~\ref{thm:johndisk}]
 Let $\yy$ be a John disk. Then by~\cite{Br3}, see also \cite{HM}, $\yy$ equipped with the internal metric, which we denote by $d_\yy$, is bilipschitz equivalent to a quasidisk. Hence there exists a quasidisk $\Omega$ and a bilipschitz map $G: (\Omega, |\cdot|) \to (\yy, d_\yy)$. Letting $L$ be the bilipschitz constant of $G$, we find that
\[|G(z_1)-G(z_2)| \leq d_\yy(G(z_1),G(z_2)) \leq L|z_1-z_2|.\]
Thus $G$ is also a Lipschitz mapping in the Euclidean metric. Now, given a boundary homeomorphism $\varphi : \partial\dd \to \partial\yy$, we let $H : \dd \to \Omega$ denote the homeomorphic extension of $G^{-1} \circ \varphi : \partial\dd \to \partial \Omega$ given by Theorem~\ref{thm:quasidisk}. Then the map $h := G \circ H$ gives a homeomorphic extension of $\varphi$ and lies in the Sobolev space $W^{1,p}(\dd)$ for all $p < 2$ since $H$ lies in these spaces and $G$ is Lipschitz. This completes the proof.

\end{proof}

\section{Proof of Theorem~\ref{thm:weightedextension}}\label{sec:weightedextension}

\begin{proof}
We construct the homeomorphic extension $h$ of $\varphi$ as follows. The  given construction can be traced back to  the extension technique of Jerison and Kenig \cite{JEKE}. First, since the unit circle is smooth we may use a bilipschitz map locally to assume it is flat. It will then be enough to give a construction of a 
homeomorphism $H$ from the triangle $T = \{(x,y) \in \R^2 : 0 \leq y \leq 1, y-1 \leq x \leq 1-y \}$ onto itself which is equal to a given boundary homeomorphism $\varphi$ on the real line part of T, the identity mapping on the rest of the boundary and has finite energy 
\[\E^T_{p,\beta}[H] = \int_T \frac{\abs{DH(z)}^p }  {\big[ \dist( h(z)) \big]^{p\beta} }  \, \dtext z < \infty \, . \] 
Here  $\dist(H(z))$ denotes  the distance of $H$ to the real line. We may assume, without loss of generality,  that the boundary  homeomorphism $\varphi \colon [-1,1] \onto [-1,1]$ is increasing. 

The reason why this suffices is that we may first cover the boundary of the unit disc by some finite number of closed intervals disjoint apart from their endpoints. For example, let us cover $\partial \dd$ by four equal length intervals $I_1,\ldots, I_4$ and let $I_j' = \varphi(I_j)$ for each $j$. Then for each of the intervals $I_j$, we connect both endpoints via a line segment to create circular segments $T_1,\ldots,T_4$ over the intervals $I_1,\ldots, I_4$ that are mutually disjoint apart from some endpoints of the $I_j$ and each set is bilipschitz-equivalent to the triangle $T$ via some uniform bilipschitz constant. We do the same to the $I_j'$ to construct four circle segments $T_1',\ldots,T_4'$ which are again mutually disjoint and uniformly bilipschitz-equivalent to the triangle $T$, especially here we use the fact that there are only a finite number of the $I'_j$ to guarantee the fact that the bilipschitz-constant is uniform. The bilipschitz mappings used here may always be chosen such that they map the part on $\partial \dd$ to the real line. See Figure \ref{trianglefig} for an illustration. Once we have shown a way to construct the homeomorphism $H$ as described in the previous paragraph, it is immediate that we obtain a map from $\cup_j T_j$ to $\cup T_j'$ with the correct boundary behaviour, injectivity and energy estimates. It remains to map the square $\dd \setminus \cup_j T_j$ to the quadrilateral $\dd \setminus \cup T_j'$ via a bilipschitz map which is easily constructed since the maps from $T_j$ to $T_j'$ will be shown to be bilipschitz on $\partial T_j \setminus \partial \dd$. On this square the energy of $H$ will be finite since the derivative is bounded from above and the singularity poses no problem since $\dist(H(z))$ is controlled by $\dist(z)$ from below since $H$ is bilipschitz on the square, and $p\beta < 1$.

\begin{figure}[ht]
\includegraphics[scale=0.5]{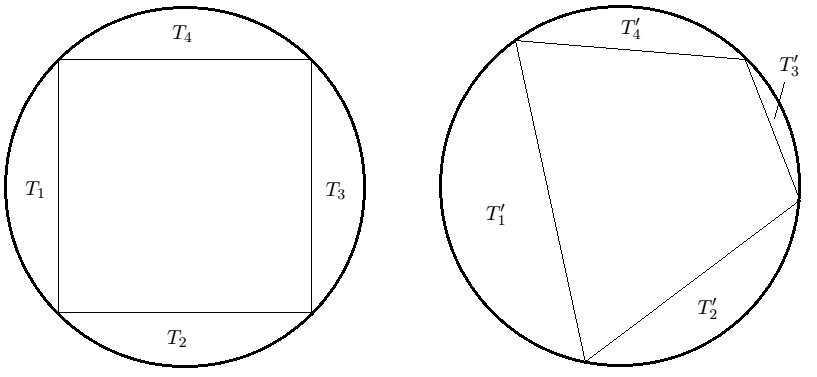}
\caption{The sets $T_j$ and $T_j'$.}\label{trianglefig}
\end{figure}

We now split the boundary interval $[-1,1]$ of $T$ into dyadic intervals $I_{k,j}$, where $j$ denotes the generation of the interval, i.e. $|I_{k,j}| = 2^{-j}$. We denote by $A_{k,j}$ and $B_{k,j}$ the endpoints of such an interval from left to right. For each $k,j$ we also have an image interval $I'_{k,j}$ which is the interval between $\varphi(A_{k,j})$ and $\varphi(B_{k,j})$ on the image side. We shall now construct, for each dyadic interval, a set $U_{k,j}$ that will be mapped onto an image set $U'_{k,j}$. None of these sets will overlap apart from their boundaries and both collections of sets will have union exactly equal to the original triangle $T$.

For an interval $I$ on the real line, we denote by $V(I)$ the point in the upper half plane which, together with the endpoints of $I$, forms an isosceles triangle with base $I$ and right angle at the point $V(I)$, we call this the \emph{apex point} of $I$. Let us now fix a dyadic interval $I_{k,j}$, and we will drop subscripts for the rest of this construction for ease of notation so that $I_{k,j}$ is simply $I$, its image interval $I'_{k,j}$ is simply $I'$ and so on. We define five points $X,Y,z,y,x$ as follows. The first point $X$ is the apex point $V(I)$ of $I$. The second point $Y$ is the apex point of $I_{k+1,j}$, the neighbouring dyadic interval of $I$ from the right. The points $x$ and $y$ are the apex points of the two children of $I$ from left to right. The point $z$ is the apex point of the first child of $I_{k+1,j}$ from the left. Connecting the points $X,Y,x,y,z$ in that order gives a parallelogram with one extra point $y$ on the side between $x$ and $z$ which we call $U$, and we consider this as a pentagon with one straight angle. We now let $X'$ denote the apex point of the image interval $I'$ of $I$. Similarly we define $Y',x',y'$ and $z'$. We now connect the points $X',Y',z',y',x'$ in that order to form a pentagon which we denote by $U'$. See Figure \ref{diamondfig} for an illustration of the configuration. 
\begin{figure}[ht]
\includegraphics[scale=0.35]{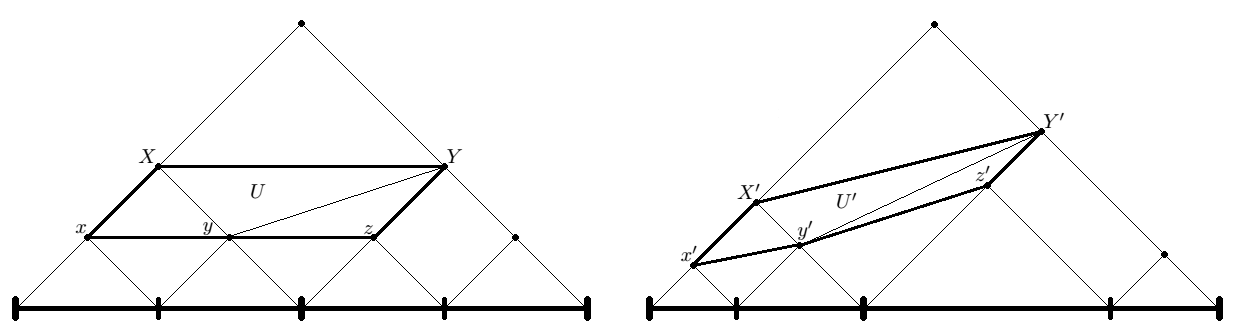}
\caption{The set $U$ and its image set $U'$.}\label{diamondfig}
\end{figure}
We now wish to construct a piecewise affine map from $U$ to $U'$ which is affine on each of the five sides of $U$, mapping each side to the corresponding side of $U'$. Such a construction is easily done if we map the triangles $\Delta Xyx$, $\Delta XYy$ and $\Delta Yzy$ via affine maps to the corresponding triangles on the image side. If the dyadic interval $I$ is the last one of its generation, meaning that there is no neighbouring dyadic interval on the right of it on the interval $[-1,1]$, then we simply define $U$ as the triangle $\Delta Xyx$ and $U'$ as $\Delta X'y'x'$. It is clear that these sets are disjoint apart from their boundaries and fill the triangle $T$ completely. If we define our map $H$ in each of the sets $U$ as described above, then it is also immediate that $H$ is a homeomorphism from $T$ to itself which is equal to the given boundary map $\varphi$ on the real line.


We begin to compute the energy $\mathcal E^T_{p, \beta} [H]$ on $U$. For that, let us start by looking at the map $H$ from $\Delta Xyx$ onto $\Delta X'y'x'$. To make our argument more general, we compute the energy of a general affine map $H$ from $\Delta Xyx$ to a triangle $\Delta P_1P_2P_3$ in the upper half plane. If $L$ denotes the distance of the point $X$ to the real line, then all of the sides of $\Delta Xyx$ are comparable to $L$ and $L$ is comparable to $2^{-j}$, where $j$ is the generation of $I$. Let us define the number $b$ as the largest side length in the triangle $\Delta P_1P_2P_3$. Hence the norm of the differential of the map $H$ on $\Delta Xyx$ can be estimated from above by a constant times $b/L$ since the angles in $\Delta Xyx$ are bounded from below by some positive constants. Thus
\begin{equation}\label{triangleintegral}
\int_{\Delta Xyx} \frac{\abs{DH(z)}^p}{\dist(H(z))^{p\beta}}\, \dtext z \leq C\frac{b^p}{L^p}\int_{\Delta Xyx} \frac{1}{\dist(H(z))^{p\beta}}\, \dtext z.
\end{equation}
It remains to estimate the integral expression on the right-hand side. Clearly we may assume that $p\beta >0$.  Suppose without loss of generality that the distance from $P_1$ to the real line is larger than or equal  to the distance from $P_2$ and $P_3$ to the real line, and call this largest distance $\tilde{b}$. If we now modify the triangle $\Delta P_1P_2P_3$ by moving both points $P_2$ and $P_3$ to the real line, then the integral on the right-hand side of \eqref{triangleintegral} increases. In this case, the quantity $\dist(H(z))$ is equal to $\tilde{b}$ on one vertex of the triangle $\Delta Xyx$ and equal to zero on the opposing side of the triangle. To make the computation a bit easier, we cover this triangle with a rectangle $R$ whose both side lengths are comparable to $L$, one side of $R$ contains the side of the triangle where $\dist(H(z))$ is zero and the opposing side of $R$ contains the vertex where $\dist(H(z))$ is equal to $\tilde{b}$. See Figure \ref{diamondfigy} for an illustration.
\begin{figure}[ht]
\includegraphics[scale=0.35]{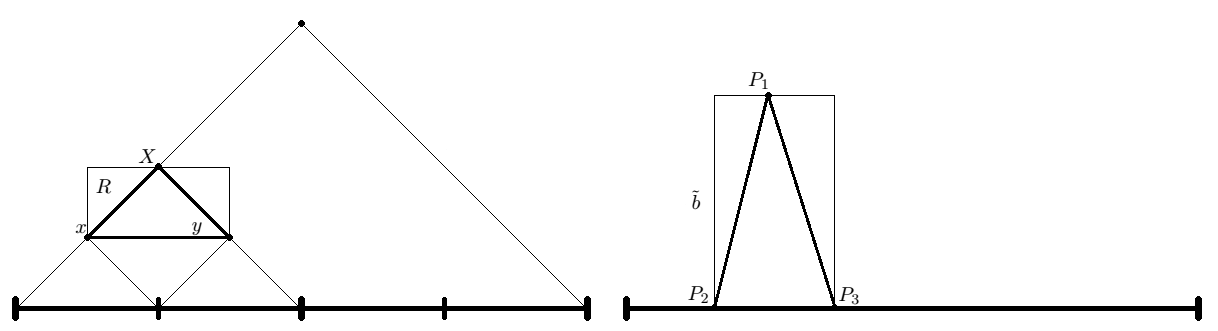}
\caption{The map $H$ on the rectangle $R$.}\label{diamondfigy}
\end{figure}
Letting the side lengths of $R$ be $c_1L$ and $c_2L$, we compute that
\begin{align*}
\int_{R} \frac{1}{\dist(H(z))^{p\beta}}\, \dtext z
&\leq C_1\int_0^{c_1L}\int_0^{c_2L} \frac{1}{\left(\frac{\tilde b}{L} y\right)^{p\beta}}\, \dtext x \, \dtext y
\\&= C_2\frac{L^{1+p\beta}}{\tilde b^{p\beta}} \int_0^{c_1L} \frac{1}{y^{p\beta}}\, \dtext y
\\&= \frac{C_3}{1-p\beta}\frac{L^2}{\tilde{b}^{p\beta}}.
\end{align*}
Here we also used the assumption $p \beta <1$.
Hence for the full energy of $H$ over $\Delta Xyx$ we obtain the estimate
\[\int_{\Delta Xyx} \frac{\abs{DH(z)}^p}{\dist(H(z))^{p\beta}}\, \dtext z  \leq C L^{2-p}\frac{b^p}{\tilde{b}^{p\beta}},\]
where $C$ only depends on $p$ and $\beta$. 

Let us now suppose that the arbitrary triangle $\Delta P_1P_2P_3$ was $\Delta X'y'x'$. In the triangle $\Delta X'y'x'$, the maximum distance from each vertex to the real line is comparable to the maximum side length because of the way this triangle was constructed via apex points of the dyadic intervals. Hence we find that
\[\int_{\Delta Xyx} \frac{\abs{DH(z)}^p}{\dist(H(z))^{p\beta}}\, \dtext z  \leq C L^{2-p}\, b^{p-p\beta},\]
where $b$ again denotes the maximum side length of the triangle $\Delta X'y'x'$ or equivalently the maximum length of the dyadic intervals whose apex points are vertices of this triangle.
\\\\
The calculation in the triangles $\Delta XYy$ and $\Delta Yzy$ is now exactly the same. In each of these triangles the side lengths are comparable to the same $L$ as before, and the angles are controlled from below. The energy of $H$ on both of these triangles is again estimated from above by the maximum side length of the corresponding target triangle, which is always comparable to the maximum distance to the real line in the target triangle. Thus we find the same estimate
\begin{equation}\label{deltaestim}\int_{\Delta} \frac{\abs{DH(z)}^p}{\dist(H(z))^{p\beta}}\, \dtext z  \leq C L^{2-p}\, b_\Delta^{p-p\beta},\end{equation}
where $\Delta$ denotes one of the triangles $\Delta XYy$ or $\Delta Yzy$ and $b_\Delta$ denotes the maximum side length of the corresponding target triangle, which is again comparable to the maximum length of the involved dyadic intervals. If the original dyadic interval was $I_{k,j}$, then these maximum lengths of the corresponding target triangles over each of the three triangles that make up the corresponding set $U$ are estimated from above by the quantity $|I'_{k,j}| + |I'_{k+1,j}|$. Applying this to \eqref{deltaestim} gives that
\[\int_{U} \frac{\abs{DH(z)}^p}{\dist(H(z))^{p\beta}}\, \dtext z  \leq C L^{2-p}\left(|I'_{k,j}|^{p-p\beta} + |I'_{k+1,j}|^{p-p\beta}\right).\]
Summing over all of the dyadic intervals, we find that
\[\int_{T} \frac{\abs{DH(z)}^p}{\dist(H(z))^{p\beta}}\, \dtext z  \leq C \sum_{j=0}^\infty 2^{-j(2-p)}\sum_{k=1}^{2^j}|I'_{k,j}|^{p-p\beta}.\]
We aim to show that the double sum on the right-hand side is finite. We consider first the case $p(1-\beta) \geq 1$. In this case
\begin{align*}\sum_{j=1}^\infty 2^{-j(2-p)}\sum_{k=1}^{2^j} |I'_{k,j}|^{p(1-\beta)} &\leq \sum_{j=1}^\infty 2^{-j(2-p)}\left(\sum_{k=1}^{2^j} |I'_{k,j}|\right)^{p(1-\beta)} \\&= \sum_{j=1}^\infty 2^{-j(2-p)} \cdot 2^{p(1-\beta)} < \infty\end{align*}
since $\sum_{k=1}^{2^j} |I'_{k,j}|$ is just the total length of the target boundary on the real line, which we assume to be equal to $2$. If $p(1-\beta) < 1$, then by H\"older's inequality
\[\sum_{k=1}^{2^j} |I'_{k,j}|^{p(1-\beta)} \leq 2^{j(1-p(1-\beta))}\left(\sum_{k=1}^{2^j} |I'_{k,j}|\right)^{p(1-\beta)} = 2^{j(1-p(1-\beta))}\cdot 2^{p(1-\beta)}.\]
Hence
\[\sum_{j=1}^\infty 2^{-j(2-p)}\sum_{k=1}^{2^j} |I'_{k,j}|^{p(1-\beta)} \leq 2^{p(1-\beta)}\sum_{j=1}^\infty 2^{-j(1-p\beta)} < \infty.\]
Thus $\mathcal E^T_{p, \beta} [H]< \infty$, which completes the proof.
\end{proof}
\section{Proof of Theorem~\ref{thm:holder}}\label{sec:holder}

\begin{proof}
Let us first invoke a result of Rohde \cite{Ro} which states that any quasicircle is bilipschitz equivalent to a snowflake-type curve. This allows us to assume that $\Gamma$ is a snowflake-type curve. We shall briefly explain the definition of such a snowflake-type curve.

To construct a snowflake-type curve $S$, we fix a parameter $p \in [1/4,1/2)$. Let us then construct a sequence of curves $(S_n)$ as follows. Let $S_0$ denote the unit square in the plane, and let us call its sides the \emph{segments} of $S_0$. We now construct the sequence $(S_n)$ inductively. For each segment $s$ in $S_n$, there are two choices. We replace the segment with a translated and scaled copy one of the two choices in Figure \ref{fig4}. In any case, the segment $s$ has been replaced by four smaller segments, which we call the \emph{children} of $s$. The curve obtained by making this choice for each segment $s$ in $S_n$ will be the curve $S_{n+1}$. We assume from now on that all of the choices in the construction have been fixed. The collection of all segments in all of the curves $S_n$ is denoted by $\mathcal{P}$.

\begin{figure}[ht]
\includegraphics[scale=0.5]{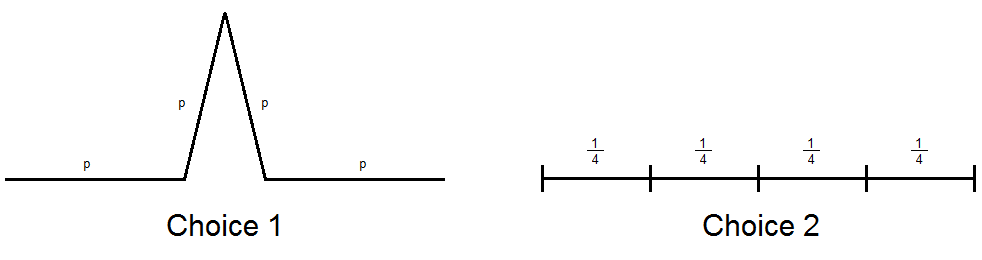}
\caption{The two choices of how to replace a segment in $S_n$. The quantities in the picture represent the portion of the total length of the segment.}\label{fig4}
\end{figure}

Every possible sequence of choices leads to a different sequence of curves $S_n$, but in any case these curves will converge to a limit curve $S$. Thus the snowflake-type curves are defined as limit curves of these kinds of constructions. In order to define the required quasiconformal map to the bounded Jordan domain whose boundary is $S$, it is enough to find a quasisymmetric boundary map $g: S_0 \to S$ such that $g \in C^{\alpha}$ with $\alpha > 1/2$. This is due to an extension theorem of Tukia, see~\cite{Tu1} and Theorem 2.1 in \cite{Kholder} for a convenient formulation of the result used here. We shall now explain the convergence of $S_n$ to $S$ more in detail as we would like to fix a parametrization $g_n : S_0\to S_n$ so that we obtain the desired map $g$ as a limit.

Some terminology used here: By two disjoint line segments we mean that they share at most one point (we do not pay much mind to whether line segments are open or closed). Two quantities are comparable (denoted $\approx$) if both can be estimated by a constant times the other. The dependence of the constant will be only on the parameter $p$, unless explicitly stated otherwise.

Let the exponent $\alpha$ and the number $x$ be defined by the equations
\begin{equation}\label{relations}
\left(\frac{1}{4}\right)^{\alpha} = p \qquad \text{ and } \qquad x^{\alpha} = \frac{1}{4}.
\end{equation}
Hence $\alpha > 1/2$ and $x \leq 1/4$. Let us now construct the sequence $(g_n)$ inductively. We let $g_0 : S_0 \to S_0$ be the identity map. We then construct $g_{n+1}$ based on $g_n$. For each segment $s$ in $S_n$, let $I$ be the preimage of $s$ under $g_n$ which will always be a line segment. If the segment $s$ was split according to Choice 1 in Figure \ref{fig4}, then we split $I$ into four equal length line segments and define $g_{n+1}$ so that it maps each of these line segments to the children of $s$ linearly, see Figure \ref{fig5}. If instead $s$ was split according to Choice 2, then we split $I$ as in Figure \ref{fig5} into two segments of length $x$ and two segments of length $1/2-x$. These will be mapped to the children of $s$ as in Figure \ref{fig5}. The four intervals that $I$ splits into are also called the children of $I$. Let also $\mathcal{R}$ denote the collection of all such line segments $I$ which are preimages of some segment in $\mathcal{P}$ under the appropriate $g_n$. This also induces a natural map $g^* : \mathcal{P} \to \mathcal{R}$.

\begin{figure}
\includegraphics[scale=0.5]{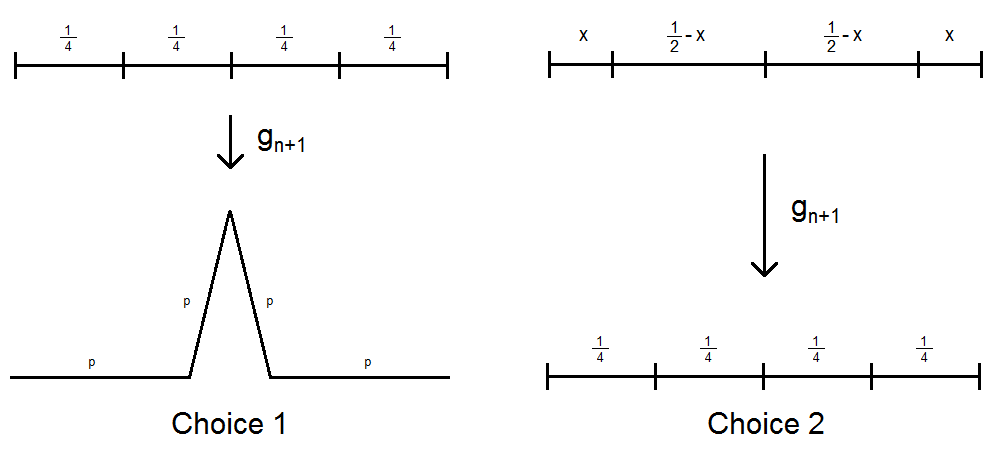}
\caption{How to construct $g_{n+1}$ in the two different cases. The quantities in the picture represent the portion of the total length of the segment.}\label{fig5}
\end{figure}
One may verify that the mappings $g_n$ converge uniformly to a homeomorphism $g: S_0 \to S$. We now aim to show the H\"older-continuity and quasisymmetry of the map $g$.\\\\
%
Let us first explain how to calculate the length $\ell(s)$ of a line segment $s$ in $\mathcal{R}$ or $\mathcal{P}$. Denote by $\mathcal{F}$ the collection of all finite words that can be formed using the letters $A, B$ and $C$. We now define a map $\tau$ from $\mathcal{R}$ to $\mathcal{F}$ inductively as follows. If $I$ is one of the sides of $S_0$, then $\tau(I)$ is the empty word. If for some $I \in \mathcal{R}$ we have already defined $\tau(I) = w$ for a word $w \in \mathcal{F}$, then $\tau$ will be defined on the children of $I$ as follows. If the children of $I$ are formed via Choice 1 in Figure \ref{fig5}, then we define $\tau(I') = wA$ for every child $I'$ of $I$, where $wA$ denotes the word obtained by adding the letter $A$ to the end of $w$. If instead the children are formed based on Choice 2, then $\tau(I') = wB$ for those children $I'$ of $I$ for which $\ell(I')/\ell(I) = x$ and $\tau(I') = wC$ for those children for which $\ell(I')/\ell(I) = 1/2-x$.

Let now $a(w)$ denote the number of letters $A$ in the word $w\in\mathcal{F}$, similarly $b(w)$ the number of letters $B$ and $c(w)$ the number of letters $C$. Then from the construction we find the formulas
\[\ell(I) = \left(\frac{1}{4}\right)^{a(\tau(I))} x^{b(\tau(I))} \left(\frac{1}{2}-x\right)^{c(\tau(I))}\qquad\text{ for all } I \in \mathcal{R}\]
and
\[\ell(s) = p^{a(\tau(g^*(s)))} \left(\frac{1}{4}\right)^{b(\tau(g^*(s)))+c(\tau(g^*(s)))}\qquad\text{ for all } s \in \mathcal{P}.\]
Notice that by the relations of $x,\alpha$ and $p$ in \eqref{relations}, we have for every $s \in \mathcal{P}$ that
\begin{equation}\label{etaEstim}
\frac{\ell(s)}{\ell(g^*(s))^\alpha} = \eta^{c(\tau(g^*(s)))} \quad \text{ where } \eta = \frac{1/4}{(\frac{1}{2}-x)^\alpha} < 1.
\end{equation}
Hence
\begin{equation}\label{segmentEstim}\ell(s) \leq \ell(g^*(s))^\alpha\end{equation} for all such $s$. We define another function $\mu$ on $\mathcal{P}$ which sends every segment $s$ to the smaller arc of the snowflake-type curve $S$ with the same endpoints as $s$. From the construction of the snowflake-type curve one may see that the diameters of $s$ and $\mu(s)$ are always comparable. Then \eqref{segmentEstim} implies that
\begin{equation}\label{gHolder} \diam(g(I)) \leq C\ell(I)^\alpha \quad \text{ for all } I \in \mathcal{R}.\end{equation}

Let now $J$ be any arc of $S_0$. Take a cover of $J$ with line segments from $\mathcal{R}$ with disjoint interiors so that the number of line segments in this cover is minimal. Then there cannot be more than six line segments in this cover since in any seven consecutive line segments there are always four which are exactly the children of another line segment in $\mathcal{R}$ (with which we could then replace these four). Furthermore, we may choose the line segments so that their length is at most a constant depending on $p$ times the total length of $J$. From \eqref{gHolder} we then find that $g$ must be H\"older-continuous of exponent $\alpha$.

We must now show that $g$ is quasisymmetric. Thus we must prove that
\[\frac{1}{C} \leq \frac{|g(x+t)-g(x)|}{|g(x) - g(x-t)|} \leq C\] for some constant $C$. Due to the nature of the construction, if we define $J_+ = [x,x+t]$ and $J_- = [x-t,x]$ then it holds that $|g(x+t)-g(x)|$ is always comparable to $\diam(g(J_+))$ and similarly $|g(x)-g(x-t)|$ is comparable to $\diam(g(J_-))$. Now note that any arc $J$ in $S_0$ may be covered by six or less line segments from $\mathcal{R}$ of comparable length with $J$ and must also contain at least one line segment from $\mathcal{R}$ of comparable length with $J$, which shows that it is enough to prove the following claim to deduce the quasisymmetry of $g$.\\\\
\textbf{Claim.} Let $C$ be a fixed constant. Suppose $I_1,I_2 \in \mathcal{R}$ are disjoint intervals such that $C^{-1} \ell(I_1) \leq \ell(I_2) \leq C\ell(I_1)$ and $\dist(I_1,I_2) \leq C \ell(I_1)$. Then the lengths of $g(I_1)$ and $g(I_2)$ are comparable by a constant only depending on $C$ and $p$.\\\\
Suppose without loss of generality that $c(\tau(I_1)) \geq c(\tau(I_2))$. By formula \eqref{etaEstim}, the lengths of $g(I_1)$ and $g(I_2)$ differ by at most a constant times $\eta^N$, where $N := c(\tau(I_1))-c(\tau(I_2))$. Hence we are to estimate the number $N$. Denote by $I_1^{(1)}$ the parent of $I_1$, $I_1^{(2)}$ the parent of $I_1^{(1)}$ and so on. We consider the line segment $I^* = I_1^{(N-2)}$. This choice implies that
\begin{equation}\label{cEquation}
c(\tau(I^*)) \geq c(\tau(I_2))+2,
\end{equation} which shows that the words $\tau(I^*)$ and $\tau(I_2)$ differ by at least two letters $C$. The line segments $I^*$ and $I_2$ must be disjoint since otherwise one would contain the other, which would either imply $I_1 \subset I_2$ or contradict \eqref{cEquation}.

Suppose first that the word $\tau(I_2)$ has at least as many letters as $\tau(I^*)$, meaning that $I_2$ is of the same or later generation than $I^*$. In this case let $I_2^*$ be the line segment in $\mathcal{R}$ which contains $I_2$ and such that $\tau(I^*)$ and $\tau(I_2^*)$ have the same length. Since $\tau(I_2^*)$ has fewer letters than $\tau(I_2)$, we have $c(\tau(I^*)) \geq c(\tau(I_2^*))+2$ by \eqref{cEquation}. Hence there must be a line segment $I_3^*$ in $\mathcal{R}$ between $I^*$ and $I_2^*$ of the same generation. We may assume this line segment is a neighbor of $I^*$. Now we must have that $\ell(I_3^*) \leq \dist(I_1,I_2) \leq C\ell(I_1)$. Furthermore,
\[\ell(I_3^*) \approx \ell(I^*) \geq \frac{\ell(I_1)}{(1/2-x)^{N-2}},\]
since taking the parent of a line segment increases the length by at least a factor of $(1/2-x)^{-1}$. This gives a bound for $N$ in terms of $C$ and $p$, which is enough.

Suppose now that the word $\tau(I_2)$ has less letters than $\tau(I^*)$. Let $I_2^*$ be the line segment in $\mathcal{R}$ which is contained in $I_2$, of the same generation as $I^*$, and closest to $I^*$ (hence sharing an endpoint with $I_2$). Then by construction we have $c(\tau(I_2^*)) = c(\tau(I_2)) \leq c(\tau(I^*))-2$. The rest of the proof follows the same line of arguments as the previous case (starting from the definition of $I_3^*$). This proves the claim.
\end{proof}


\begin{thebibliography}{0}




\bibitem{Anb}
 S. S. Antman,   {\em Nonlinear problems of elasticity. Applied Mathematical Sciences}, 107. Springer-Verlag, New York, 1995.




\bibitem{AIMb}
 K. Astala,   T.  Iwaniec,  and G. Martin,  {\em Elliptic partial differential equations and quasiconformal mappings in the plane}, Princeton University Press, 2009.




\bibitem{Bac}
J. M. Ball,   {\em Convexity conditions and existence theorems in nonlinear elasticity},  Arch. Rational Mech. Anal. {\bf 63} (1976/77), no. 4, 337--403.


\bibitem{Br}
O. J. Broch,   {\em Quadrilaterals and John disks},  Comput. Methods Funct. Theory 4 (2004), no. 2, 419--434.

\bibitem{Br2}
O. J. Broch,   {\em Extension of internally bilipschitz maps in John disks},  Ann. Acad. Sci. Fenn. Math. 31 (2006), no. 1, 13--30.

\bibitem{Br3}
O. J. Broch,   {\em John disks are local bilipschitz images of quasidisks},  Hiro. Math. J. 38 (2008), no. 2, 193--201. 



\bibitem{CP}
S. Chen and S. Ponnusamy,  {\em John disks and K-quasiconformal harmonic mappings},  J. Geom. Anal. 27 (2017), no. 2, 1468--1488. 


\bibitem{Cib}
P. G.  Ciarlet,  {\em Mathematical elasticity Vol. I. Three-dimensional elasticity}, Studies in Mathematics and its Applications, 20. North-Holland Publishing Co., Amsterdam, 1988.





\bibitem{GNV}
M. Ghamsari, R. N\"akki, and J.  V\"ais\"al\"a, {\em J. John disks and extension of maps},  Monatsh. Math. 117 (1994), no. 1-2, 63--94.


\bibitem{GeMo}
F. W. Gehring, \textit{Characteristic properties of quasidisks},
S\'eminaire de Math\'ematiques Sup\'erieures [Seminar on Higher Mathematics], 84. Presses de l’Universit\'e de Montr\'eal, Montreal, Que., (1982).

\bibitem{GHM}
F. W. Gehring,   K. Hag, and O. Martio, {\em Quasihyperbolic geodesics in John domain} Math Scand, (1989), 65 (1) 75--92.



\bibitem{GP}

F.  W. Gehring  and B. P.  Palka  {\em Quasiconformally homogeneous domains},  J. Analyse Math. 30 (1976), 172--199.


\bibitem{HKb}
S. Hencl and P. Koskela, \textit{Lectures on mappings of finite distortion}, Lecture Notes in Mathematics, 2096. Springer, Cham, (2014).


\bibitem{HM}
D. Herron,  and D.  Meyer, 
\textit{Quasicircles and bounded turning circles modulo bi-Lipschitz maps}, 
Rev. Mat. Iberoam. 28 (2012), no. 3, 603--630. 




\bibitem{IMb}
T. Iwaniec  and  G. Martin,  {\em Geometric Function Theory and Non-linear Analysis}, Oxford Mathematical Monographs, Oxford University Press, 2001.



\bibitem{IMS}
 Iwaniec, T.,  Martin, G. and  Sbordone, C.  $L^p$-integrability \& weak type $L^2$-estimates for the gradient of harmonic mappings of $\mathbb D$. Discrete Contin. Dyn. Syst. Ser. B {\bf 11} (2009), no. 1, 145--152.



\bibitem{IOZ1}
T. Iwaniec, J. Onninen and Z, Zhu, {\em Deformations of Bi-conformal Energy and a new Characterization of Quasiconformality},
 Arch. Ration. Mech. Anal. 236 (2020), no. 3, 1709--1737.

\bibitem{JEKE}
D. S. Jerison and C. E. Kenig, {\em Hardy Spaces, $A_\infty$, and singular integrals on chord-arc domains}, Math. Scand. 50 (1982), 221-247.

\bibitem{John}
F. John, \textit{Rotation and strain}, Comm. Pure. Appl. Math. 14 (1961), 391--413.




\bibitem{KOext}
A. Koski  and  J. Onninen, \textit{Sobolev homeomorphic extensions}, J. Eur. Math. Soc. to appear, arXiv:1812.02811.




\bibitem{Kholder}
P. Koskela, \textit{The Degree of Regularity of a Quasiconformal Mapping}, Proc. Amer. Math. Soc. 122 (1994), no. 3, 769-772.





\bibitem{MS}
O. Martio, and J. Sarvas, {\em Injectivity theorems in plane and space}, Ann. Acad. Sci. Fenn. Ser. A I Math. 4 (1979), no. 2, 383--401. 

\bibitem{Mc}
 C. T. McMullen, {\em Kleinian groups and John domains},  Topology 37 (1998), no. 3, 485--496. 

\bibitem{NV}
R. N\"akki, and J. V\"ais\"al\"a: John disks. - Expo. Math. 9, (1991), 9--43.



\bibitem{Reb}
Yu. G. Reshetnyak,   {\em Space mappings with bounded distortion}, American Mathematical Society, Providence, RI, 1989.


\bibitem{Ro}
S. Rohde, \textit{Quasicircles modulo bilipschitz maps},  Rev. Mat. Iberoamericana 17 (2001), no. 3, 643--659.



\bibitem{Tu}
P. Tukia,  {\em The planar  Sch\"onflies theorem for Lipschitz maps}, Ann. Acad. Sci. Fenn. Ser. A I Math. 5 (1980), 49--72.

\bibitem{Tu1}
P. Tukia,  {\em Extension of quasisymmetric and Lipschitz embeddings of the real line into the plane},  Ann. Acad. Sci. Fenn. Ser. A I Math. 6 (1981), no. 1, 89--94. 



\bibitem{TV}

P. Tukia and J.  V\"ais\"al\"a, {\em J. Lipschitz and quasiconformal approximation and extension},  Ann. Acad. Sci. Fenn. Ser. A I Math. 6 (1981), no. 2, 303--342 





\bibitem{Va}
J. V\"ais\"al\"a,  {\em Unions of John domains},  Proc. Amer. Math. Soc. 128 (2000), no. 4, 1135--1140.

\bibitem{Ve}
Verchota, G. C. {\em Harmonic homeomorphisms of the closed disc to itself need be in $W^{1,p}, p<2$, but not $W^{1,2}$}, 
Proc. Amer. Math. Soc. {\bf 135} (2007), no. 3, 891--894.

\bibitem{Zh}
Y.R.-Y.  Zhang  {\em Schoenflies solutions with conformal boundary values may fail to be Sobolev}, Ann. Acad. Sci. Fenn. Ser. A I Math. 44 (2019), 791-796.


\end{thebibliography}
\end{document}